\documentclass[11pt]{article}

\usepackage[margin=1.08in]{geometry}
\usepackage{amsmath,amssymb,amsthm}
\usepackage{booktabs}
\usepackage{float}
\usepackage{graphicx}
\usepackage[expansion=false]{microtype}
\usepackage{xcolor}
\usepackage{hyperref}
\usepackage[ruled,vlined,linesnumbered]{algorithm2e}
\usepackage[nameinlink,capitalize,noabbrev]{cleveref}
\usepackage{tikz}
\usetikzlibrary{matrix}
\usepackage{etoolbox}
\usepackage{listings}

\hypersetup{
  colorlinks=true,
  linkcolor=blue!55!black,
  citecolor=blue!55!black,
  urlcolor=blue!60!black,
  pdftitle={Local dimension testing via quadratic obstructions},
  pdfauthor={Taylor Brysiewicz and Rainer Sinn}
}

\newtheorem{theorem}{Theorem}[section]
\newtheorem{corollary}[theorem]{Corollary}

\theoremstyle{definition}
\newtheorem{example}[theorem]{Example}

\AtBeginEnvironment{example}{%
  \pushQED{\qed}%
}
\AtEndEnvironment{example}{\popQED}

\theoremstyle{remark}
\newtheorem{remark}[theorem]{Remark}

\crefname{algocf}{Algorithm}{Algorithms}
\Crefname{algocf}{Algorithm}{Algorithms}

\newcommand{\CC}{\mathbb{C}}
\newcommand{\QQ}{\mathbb{Q}}
\newcommand{\coker}{\operatorname{coker}}
\newcommand{\rank}{\operatorname{rank}}
\newcommand{\Span}{\operatorname{span}}
\newcommand{\p}{\mathbf{p}}
\newcommand{\x}{\mathbf{x}}
\newcommand{\w}{\mathbf{w}}
\newcommand{\y}{\mathbf{y}}
\newcommand{\zero}{\mathbf{0}}
\renewcommand{\u}{\mathbf{u}}
\renewcommand{\v}{\mathbf{v}}
\newcommand{\mydef}[1]{{\color{blue}#1}}

\definecolor{maroon}{RGB}{133,5,63}
\definecolor{forestgreen}{RGB}{34,139,34}

\lstdefinelanguage{M2}{
  basicstyle=\small\ttfamily,
  alsoletter=",
  classoffset=1,
  comment=[l]{\--},
  commentstyle=\color{gray},
  keywords={
    solve, differentiate, subs, sub, real_solutions, det, sum, max,
    count, conditional_count, map, filter, map_filter, union, zip,
    product, is_finite, is_successful, is_nonsingular, track, is_real,
    is_success, first, evaluate, flatten, bitmask_filter, entries,
    unique, join, accumulate, polynomial_interpolants, reverse,
    coefficients, vcat, solution_from_necklace, length, prod, compress,
    total_degree_start_solutions, degrees, variables, iterate, struct,
    collect, convert, stretched_cube, stretched_cubes, weight_vector,
    weight_vectors, perm_to_segments, perm_to_mixedcell,
    mixed_cell_iterator, rand_approx_unit, norm, fixed,
    polyhedral_system, reduce, findall, eachrow, matrix, peek,
    coefficientRing, numgens, ring, print, ideal, toString
  },
  keywordstyle=\color{cyan},
  classoffset=2,
  morekeywords={
    @var, time, for, end, if, in, from, to, while, else, begin,
    list, new, do, scan, apply, close, openOut
  },
  keywordstyle=\color{maroon},
  classoffset=3,
  morekeywords={using, function, return, const, needsPackage},
  keywordstyle=\color{blue},
  classoffset=4,
  morekeywords={julia, >},
  keywordstyle=\color{forestgreen},
  classoffset=5,
  morekeywords={polySystem, point, newton, certifyRealSolution},
  keywordstyle=\color{purple},
  xleftmargin=0.2cm,
  xrightmargin=1em,
  columns=fullflexible,
  keepspaces=true
}

\title{\textbf{Local dimension testing via quadratic obstructions}}
\author{Taylor Brysiewicz \and Rainer Sinn}
\date{}

\begin{document}

\maketitle

\begin{abstract}
We give an algorithm, based on second-order necessary conditions, to test whether a point is isolated on an algebraic set. Using this subroutine, we develop an algorithm that bounds the local dimension of an algebraic set at a point. The principal computation in our isolation test is governed by the nullity of the Jacobian, avoiding the combinatorial growth of classical methods. For exact input, the bound returned by the algorithm is certified. We compare our approach with classical methods. As an application, we show that the realization space of the $24$-cell has the expected local dimension at its symmetric regular realization, completing the local dimension program for regular $4$-polytopes initiated by Rastanawi, Sinn, and Ziegler. 
\end{abstract}

\section{Introduction}
The \textit{local dimension} of an algebraic set $X \subseteq \mathbb{C}^n$ at a point $\p \in X$ is the largest dimension of a component of $X$ containing $\p$. In rigidity theory, local dimension measures local degrees of freedom \cite{AdrianHimmelmannWinterZhang2026,ConnellyWhiteley1996}. Local dimension also appears as a core concept in early algorithms for \textit{numerical irreducible decomposition} \cite{SommeseVerscheldeWampler2001}, a foundational task in numerical algebraic geometry~\cite{HauensteinSommese2017}. Namely, it is used to remove extraneous points from witness supersets in the cascade algorithm~\cite{BatesHauensteinPetersonSommese2009,SommeseVerscheldeWampler2001}.

Algorithms for computing local dimensions, called \textit{local dimension tests}, extract geometric data that may not be visible from first-order information. Let $F$ be a set of $m$ defining equations for $X$, and let $J$ be the Jacobian of $F$ evaluated at $\p$. The local dimension of $X$ at $\p$ is bounded above by the nullity $n - \rank(J)$ of $J$, the dimension of the Zariski tangent space of the scheme defined by $F$ at $\p$ \cite[Chapter 9.6, Theorem 8]{zbMATH06423120}. A lower bound of $n-m$ follows from the observation that imposing one equation decreases the dimension  by at most one. In summary,
\[
n-m \leq \dim_{\p}(X) \leq n-\rank(J).
\]
 An \textit{isolation test}, on the other hand, determines only whether the local dimension is zero. Thus, every local dimension test specializes to an isolation test. Conversely, an isolation test yields a local dimension test by slicing $X$ with generic linear spaces through $\p$: the local dimension is the smallest codimension of a slice that makes $\p$ isolated.

A one-sided isolation test  returns either \textit{isolated} or \textit{inconclusive}. Since genericity of linear slices can rarely be certified in practice, a result of \textit{isolated} applied to a codimension-$k$ slice of $X$ through $\p$ establishes only an upper bound of $k$ on the local dimension $\dim_{\p}(X)$.

We give such a one-sided isolation test in \Cref{alg:quadratic-isolation}. The foundational observation underlying the algorithm is that a nonisolated point must admit a curve through it and that a local parametrization of such a curve must vanish identically when evaluated at the defining equations of $X$. Extracting the lowest-order term of that evaluation shows that the sliced Jacobian must have a nontrivial kernel, i.e., that the sliced Zariski tangent space is nontrivial. Extracting the second-lowest-order term produces quadratic equations in the coordinates of the kernel. If these equations admit no nonzero solution, then no such curve exists and $\p$ is isolated.

The isolation test of \Cref{alg:quadratic-isolation} induces the local dimension test of \Cref{alg:localdimension}, which produces bounds on the local dimension $\dim_{\p}(X)$. Either algorithm may be executed on symbolic or numerical input. If the input and steps are exact, then the output of the algorithm is certified.

\begin{theorem}[Local dimension bounds]\label{thm:localdimensionbounds} Let $F \subseteq \mathbb{Q}[x_1,\ldots,x_n]$ be a set of $m$ polynomials cutting out a variety $X\subseteq \mathbb{C}^n$ containing $\p \in \mathbb{Q}^n$. 
\Cref{alg:localdimension} terminates on input $(F,\p)$ and returns an output $[\lambda,k]$ such that
\[
  \lambda\leq\dim_{\p}(X)\leq k,
  \qquad
  \lambda=\max\{n-m,0\}.
\]
\end{theorem}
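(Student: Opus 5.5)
We have not yet seen Algorithm~\ref{alg:localdimension}, so the plan rests on its structure as described in the introduction. The algorithm outputs the trivial lower bound $\lambda=\max\{n-m,0\}$. It then slices $X$ at $\p$ with rational linear spaces of increasing codimension $k=0,1,2,\dots$. At each stage it calls the one-sided isolation test of Algorithm~\ref{alg:quadratic-isolation}, and it returns $[\lambda,k]$ for the first $k$ at which the test reports \emph{isolated}. Accordingly, the proof splits into three parts: the lower bound, the soundness of the upper bound, and termination.

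\textbf{Lower bound.} Nonnegativity is trivial, since $\p\in X$. For $n-m$, we use Krull's principal ideal theorem in the local ring $\mathcal{O}_{\CC^n,\p}$, which has dimension $n$. Each of the $m$ generators lowers the dimension of any minimal prime over the ideal by at most one. Hence every irreducible component of $X$ through $\p$ has dimension at least $n-m$.

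\textbf{Upper bound.} Let $L$ be the codimension-$k$ slice at which the test first reports \emph{isolated}. The first step is to show that Algorithm~\ref{alg:quadratic-isolation} is correct whenever it says \emph{isolated}. Suppose instead that $\p$ is not isolated in $X\cap L$. Then, by the curve selection lemma (or by taking a one-dimensional component through $\p$ and a Puiseux parametrization of one of its branches), there is an analytic arc $\gamma(t)=\p+t^{a}\v+t^{b}\w+\cdots$ with $\v\neq\zero$ lying in $X\cap L$. Substituting the arc into $F$ and extracting the lowest-order coefficient gives $J\v=\zero$ on the slice, together with the corresponding linear condition from $L$. The next-order coefficient yields the quadratic system $J\w+\tfrac12 \nabla^2F(\p)[\v,\v]=\zero$, or, when $b<2a$, a version with $\w$ eliminated. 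Projecting onto $\coker J$ gives quadratic equations in $\v$ that have a nonzero solution, which contradicts the output \emph{isolated}.

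Given correctness of the test, $\p$ is isolated in $X\cap L$. Every component $Z$ of $X$ through $\p$ satisfies $\dim(Z\cap L)\ge \dim Z-k$, and since $\p\in Z\cap L$ is isolated, $\dim(Z\cap L)=0$ at $\p$. Therefore $\dim Z\le k$, and so $\dim_\p(X)\le k$. This step does not require $L$ to be generic, which is why the upper bound is certified.

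\textbf{Termination.} The loop ranges over $k\le n$. At $k=n$ the slice $L$ is the point $\{\p\}$, with Jacobian of full rank $n$. The first-order condition then forces $\v=\zero$, so the test reports \emph{isolated} no later than $k=n$. Every internal step is exact linear algebra or a finite Gröbner-basis or projective-emptiness check over $\QQ$, so each call terminates.

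\textbf{Main obstacle.} The hardest step is the soundness of the quadratic obstruction when the arc has unequal exponents. One must verify that the second-order condition the algorithm actually checks is necessary for all curve parametrizations, not only for those with $b=2a$. I would first try a reparametrization argument: normalize to $a=1$ using Puiseux series, then observe that terms of order strictly between $t^{1}$ and $t^{2}$ lie in $\ker J$ and so do not affect the projection onto $\coker J$.
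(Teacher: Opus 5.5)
Your proposal is correct and follows the paper's proof: the lower bound comes from the codimension (Krull) bound, termination at $k=n$ comes from the full-rank Jacobian giving $b=0$, and the upper bound comes from soundness of the quadratic test (the curve argument of \Cref{thm:quadratic-certificate}, which you inline) combined with the slicing inequality \eqref{eq:slice-upper}. The case you flag as the main obstacle needs no reparametrization, because the coefficient of $t^{2a}$ in $F(\gamma(t))$ is exactly $J\mathbf{c}+F_2(\v)$, where $\mathbf{c}$ is the $t^{2a}$-coefficient of $\gamma$ (zero if absent): a term $t^{j}$ with $a<j<2a$ contributes to the quadratic part only through cross terms of order $a+j>2a$, and the cubic and higher parts start at order $3a$.
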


We implemented \Cref{alg:localdimension} in \texttt{Macaulay2} \cite{Macaulay2} for one example of interest: the symmetric regular  realization of the $24$-cell in its realization space. Rastanawi, Sinn, and Ziegler showed that it is singular and bounded its local dimension between $48$ and $50$~\cite[Theorem~5.10 and Remark~5.11]{RastanawiSinnZiegler2021}. We apply \Cref{alg:localdimension} to prove that its local dimension is $48$.
\begin{theorem}
\label{thm:24cell}
The symmetric regular $24$-cell has local dimension $48$ in its realization space.
\end{theorem}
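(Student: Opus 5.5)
The plan is to squeeze $\dim_{\p}(X)$ between two matching bounds. For the lower bound of $48$ I will use the result of Rastanawi, Sinn, and Ziegler \cite[Theorem~5.10 and Remark~5.11]{RastanawiSinnZiegler2021}, which already gives $48\le \dim_{\p}(X)\le 50$. What remains is the upper bound $\dim_{\p}(X)\le 48$. By \Cref{thm:localdimensionbounds}, it suffices to run \Cref{alg:localdimension} on an exact rational model $(F,\p)$ of the realization space at the symmetric regular $24$-cell and check that it returns $k=48$. Since the bound returned is certified for exact input, no genericity issue arises for the upper bound. A codimension-$48$ slice through $\p$ on which the quadratic isolation test of \Cref{alg:quadratic-isolation} certifies that $\p$ is isolated shows that every component through $\p$ has dimension at most $48$. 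This holds for an arbitrary slice, not only a generic one.

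\textbf{Setting up $F$ and $\p$.} I will use the realization-space model of \cite{RastanawiSinnZiegler2021}: the coordinates are the vertex coordinates (or facet normals) of the $24$ vertices, and the equations express that each prescribed set of vertices lies on a common facet hyperplane. These are determinantal or bilinear incidence equations with rational coefficients. The regular $24$-cell admits rational coordinates, namely the permutations of $(\pm1,\pm1,0,0)$, so $\p\in\QQ^n$ as the theorem requires. I must make sure the chosen model has the same local dimension $48$ as the lower bound of the cited paper. If the cited count includes the projective-transformation action or other normalizations, I will either keep these consistent or adjust both bounds by the same amount.

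\textbf{Certifying the slice.} Choose $48$ random rational linear forms vanishing at $\p$ and append them to $F$. Compute the exact Jacobian $J$ of the sliced system at $\p$. Its nullity should be small: at least $1$, because the unsliced nullity is $50$ or more by singularity, and in practice about $2$. Next, compute a rational basis $v_1,\dots,v_r$ of $\ker J$ and a basis of $\coker J$. Then form the quadratic system obtained by pairing $\coker J$ with the Hessian terms $\sum_{i,j} a_ia_j\,\partial^2F(\p)[v_i,v_j]$, as in \Cref{alg:quadratic-isolation}. Finally, verify exactly, using a Gröbner basis in $r$ variables, that this system has only the zero solution, i.e.\ its radical contains a power of the maximal ideal. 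Because the test depends only on the nullity $r$, this last computation is cheap.

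\textbf{Main obstacle.} The hard step is making the quadratic obstruction actually succeed at codimension $48$. A random slice might still leave nonzero solutions of the quadratic system, because second-order information is not sufficient in general; the test would then return \emph{inconclusive}. If that happens, I would try three remedies in order. First, exploit the symmetry of $\p$ by choosing slices that respect the dimension count while staying rational. Second, resample slices. Third, choose a different but equivalent set of defining equations, for instance including all the incidence minors so that the Zariski tangent space is the scheme-theoretic one. The remaining difficulty is purely computational: exact linear algebra on a Jacobian of size roughly several hundred by about $100$. This is routine in \texttt{Macaulay2} over $\QQ$.
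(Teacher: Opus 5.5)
Your strategy is the paper's. You take the lower bound $48$ from Rastanawi, Sinn, and Ziegler. You get the upper bound by cutting with a codimension-$48$ linear space through $\p^*$, certifying isolation there with the exact quadratic test, and applying the slicing inequality.

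The gap is that you never establish the fact that carries the theorem: that for some codimension-$48$ slice the quadratic obstruction equations have only the trivial zero. Your ``main obstacle'' paragraph concedes the test might return \textsc{inconclusive} and offers only heuristic remedies. So, as written, this is a plan whose conclusion depends on a computation you have not done. Beyond the already known $48\le\dim\le 50$, that computation is the whole content of the theorem.

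The paper supplies it concretely. It works in the centered vertex-facet model $\mathcal X\subseteq\CC^{192}$ cut out by the $144$ equations $\mathbf a_j^T\v_i=1$, and fixes the twelve vertices $\v_i$, $i\in\{1,2,3,6,7,10,15,18,19,22,23,24\}$, at their regular coordinates. The resulting $192\times192$ Jacobian has rank $188$, so $b=c=4$. The four explicit quadrics $q_1,\dots,q_4$ satisfy $\dim(I_q)=0$ over $\QQ$, so none of your fallbacks is needed.

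Three smaller points.

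\textbf{Containment.} You should fix the model and state explicitly that the realization space lies in $\mathcal V(F)$. The algorithm bounds $\dim_{\p}\mathcal V(F)$, and $\mathcal V(F)$ is the incidence variety, which is larger than the realization space. The paper transfers the bound through $\mathcal X_P\subseteq\mathcal X$.

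\textbf{Dimension counts.} Your numbers are off. The unsliced Zariski tangent space at $\p^*$ has dimension $52$ (Jacobian rank $140$); singularity alone would give only $\ge 49$. So a codimension-$48$ slice leaves nullity $4$, not about $2$.

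\textbf{Random slices.} Your random-slice variant would in fact work.
\begin{itemize}
\item For any slice $\Lambda$ with $\rank J=188$, the row spaces of the incidence Jacobian $J_0$ and of $\Lambda$ meet trivially. Hence $\ker(J^T)$ is $\ker(J_0^T)$ padded by zeros.
\item It follows that $q$ is the restriction of one fixed quadratic map $Q:\ker J_0\cong\CC^{52}\to\CC^4$ to the $4$-dimensional subspace $\ker J_0\cap\ker\Lambda$.
\item Every component of $Q^{-1}(\zero)$ has dimension at least $48$. Success for the coordinate slice forces $\dim Q^{-1}(\zero)=48$.
\item Therefore a generic $4$-dimensional subspace meets $Q^{-1}(\zero)$ only at $\zero$, and a random rational slice succeeds with high probability.
\end{itemize}
The resulting certificate is exact and valid whatever slice is used.
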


\begin{figure}[!htpb]
\centering
\includegraphics[scale=0.3]{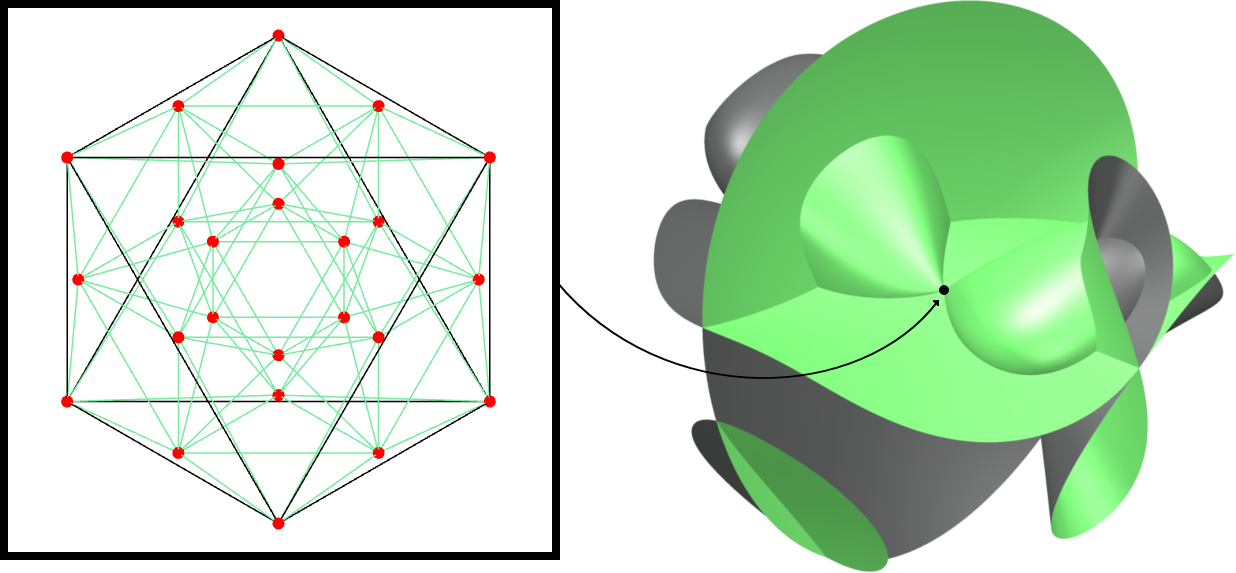}
\caption{(Left) A $3$D projection of the edge skeleton of the symmetric
regular $24$-cell (created in \texttt{polymake}
\cite{GawrilowJoswig2000}).  (Right) A cartoon of a two-dimensional slice of
the realization space of all $24$-cells, with the symmetric regular realization depicted as a singular point (created in \texttt{surf}
\cite{Surf2015}).}
\end{figure}
\noindent This computation completes the assessment of the local dimensions of the six regular $4$-polytopes, mostly carried out in \cite{RastanawiSinnZiegler2021}. The code in the appendix proves \autoref{thm:24cell}.

\noindent \textbf{History and relationship to previous literature:}
In $2021$, we attempted to prove \Cref{thm:24cell} by implementing the isolation test of \cite{BatesHauensteinPetersonSommese2009}. That work uses multiplicity matrices to establish the multiplicity structure of an isolated point and thereby determine whether it is isolated. We implemented the ideas of \cite{HaoSommeseZeng2013}, which substantially improve that framework by reining in the combinatorial growth of the multiplicity matrices involved. Our computation improved the possible local dimension range to two possibilities: $48$ or $49$. However, our software was not able to establish $48$ after weeks of running on a supercomputer. The quadratic obstruction calculation proves \Cref{thm:24cell} almost instantaneously on a personal laptop.

The reason \Cref{alg:localdimension} is so fast compared to our previous implementation is that the quadratic isolation test works in the coordinates of $\ker(J)$ and is only one-sided. The classical local dimension test grows combinatorially with the number of degree-$d$ monomials in $n$ variables. If the point is isolated, the classical algorithm is never inconclusive but may need to reach a large value of $d$ before drawing a conclusion. Our algorithm, however, has a cost governed primarily by the nullity of $J$ and is often inconclusive. In the case of the $24$-cell, this nullity is $4$, and the algorithm is conclusive.

The quadratic obstructions underlying our algorithms already appear in rigidity theory. Connelly and Whiteley
developed a second-order stress test for tensegrity frameworks
\cite{ConnellyWhiteley1996} and in very recent work, Adrian-Himmelmann, Winter, and
Zhang adapt this test to polytopes with edge-length and coplanarity constraints
\cite{AdrianHimmelmannWinterZhang2026}. Neither work, however, frames the quadratic obstruction equations as a general isolation test. 

\noindent\textbf{AI statement:}
In July 2026, the first author provided their 2021 implementation of a local dimension test to OpenAI's GPT-5.6 Sol model and asked for improvements that would make the computation tractable. The model suggested the quadratic obstruction approach that led to \Cref{alg:quadratic-isolation} and identified it as the quadratic term of the finite-dimensional Lyapunov--Schmidt reduction; see, for example, \cite[Chapter~VII]{GolubitskySchaeffer1985}. The model was then used for mathematical discussion, code development, and editorial assistance. Both authors  verified all mathematical claims and computations and take full responsibility for the content of this manuscript.

\noindent \textbf{Organization of the paper:} The paper is organized as follows. In \cref{sec:quadratic-test}, we formulate
the background for the quadratic isolation test.
\Cref{sec:algorithms} formulates the isolation and local dimension tests as algorithms,
\cref{sec:examples} compares them with classical local dimension calculations, and
\cref{sec:24cell} gives the exact computation for the symmetric regular $24$-cell. \Cref{app:24cell-computation} contains exact \texttt{Macaulay2} \cite{Macaulay2} code for verifying \Cref{thm:24cell}.

\section*{Acknowledgements}
TB  was supported by NSERC Discovery Grant RGPIN-
2023-03551.

\section{Setup, slicing, and quadratic isolation}\label{sec:quadratic-test}

Fix a set of $m$ polynomials in $n$ variables $\x=(x_1,\ldots,x_n)$:
\begin{equation*}
  F=\{f_1,\ldots,f_m\}\subseteq
  \CC[x_1,\ldots,x_n]=\CC[\x].
\end{equation*}
We denote the affine variety in $\CC^n$ cut out by $F$ by
\[
  X=\mathcal V(F)
  =\{\mathbf{a}\in\CC^n\mid F(\mathbf{a})=\zero\} \subseteq \CC^n.
\]
The variety $X$ decomposes into irreducible components. Given $\p \in X$, the largest dimension of a component containing $\p$ is the \mydef{local dimension of $X$ at $\p$}, denoted $\dim_{\p}(X)$. We translate $\p$ to the origin when convenient.

A point
$\p\in X$ is \mydef{isolated} if some neighborhood of $\p$ contains no other
point of $X$. The local dimension of $X$ may be characterized entirely in these terms:
if $\ell_1,\ldots,\ell_n$ are generic affine linear forms vanishing at $\p$, the \mydef{local dimension} of $X$ at $\p$ is
\begin{equation}\label{eq:slicing-characterization}
  \dim_{\p}(X)
  =
  \min\left\{
    k\mid \p\text{ is isolated in }
    X\cap\mathcal V(\ell_1,\ldots,\ell_k)
  \right\}.
\end{equation}

\subsection{Useful bounds}
We now establish several useful bounds on $\dim_{\p}(X)$. First, observe that since
$X$ is nonempty and cut out by $m$ equations $F$ in $n$ variables, \cite[Chapter I, \S6.2]{Shafarevich2013} implies that
\begin{equation*}
 \lambda =\max\{n-m,0\}\leq  \dim_{\p}(X) \leq \dim(X).
\end{equation*}
First-order information  bounds the local dimension of $X$ at $\p$. Let 
$J = ( \partial_j f_i (\p))_{i,j}$ be the Jacobian of $F$ evaluated at $\p$. The dimension $n-\rank(J)$ of the Zariski tangent space of the scheme defined by $F$ at $\p$ bounds the local dimension  \cite[Chapter 9.6, Theorem 8]{zbMATH06423120}:
\begin{equation*}
\dim_{\p}(X) \leq n-\rank(J)
\end{equation*}
Intersecting with a hypersurface $\mathcal H$ containing $\p$ decreases local dimension by at most one:
\[
  \dim_{\p}(X)
  \leq
  \dim_{\p}(X\cap\mathcal H)+1.
\]
Consequently, if $\ell_1,\ldots,\ell_k$ are linear forms vanishing at $\p$,
then
\begin{equation}
\label{eq:slice-upper}
  \dim_{\p}(X)
  \leq
  \dim_{\p}\!\left(X\cap\mathcal V(\ell_1,\ldots,\ell_k)\right)+k,
\end{equation}
and if $\p$ is isolated in $X \cap \mathcal V(\ell_1,\ldots,\ell_k)$, then $
\dim_{\p}(X)\leq k$. Since \eqref{eq:slice-upper} does not depend on the genericity of $\ell_1,\ldots,\ell_k$, a special slice may give a nonsharp upper bound but never a false one.

\subsection{Quadratic obstructions}
The key observation of this work is that the local dimension of $X$ at $\p$ is positive if and only if there is a curve in $X$ containing $\p$. 
Interpret $F$ as the polynomial map
\[
  F:\CC^n\longrightarrow\CC^m,\qquad
  \x\longmapsto
  \bigl(f_1(\x),\ldots,f_m(\x)\bigr),
\]
and write the Taylor expansion of the map $F$ at $\p$ as
\[
  F(\p+\x)
  =F(\p)+F_1(\x)+F_2(\x)+F_3(\x)+\cdots+F_d(\x),
\]
where $F_k$ is a vector of homogeneous polynomials of degree $k$ and $d=\textrm{max}(\textrm{deg}(f_i))$.

If $\p$ lies on a positive-dimensional component of $X$, then there is an algebraic curve through $\p$. Such a curve admits a local parametrization
\begin{equation}
\label{eq:curve}
  \gamma(t)=\p+t^r\u+\sum_{j>r}t^j\v_j,
  \qquad \u\neq\zero.
\end{equation}
In particular, $F(\gamma(t))\equiv\zero$.  The coefficients of $t^r$ and $t^{2r}$ in 
$F(\gamma(t))$ are 
\[
  \textrm{coefficient of }t^r: J\u,\quad \quad \quad \quad 
  \textrm{coefficient of }t^{2r}: J\v_{2r}+F_2(\u).
\]
The existence of $\gamma(t)$ requires both coefficients to vanish. The vanishing of the first coefficient recovers the first-order condition that $\u\in \ker(J)$. The vanishing of the second coefficient gives the second-order condition that $F_2(\u)\in\operatorname{im}(J)$. Similarly, there are higher-order conditions. Our algorithm focuses on the quadratic term, which 
we encode in terms of the map
\[
  q:\textrm{ker}(J)\longrightarrow\coker(J),\qquad
  q(\u)=\pi\bigl(F_2(\u)\bigr),
\]
where $\pi$ is a quotient map from $\CC^m$ to $\coker(J)=\mathbb{C}^m/\textrm{im}(J)$. 
 If
$q(\u)\neq\zero$, then $\u$ cannot occur as the leading
direction of a curve \eqref{eq:curve} on $X$.    Consequently,
if $q^{-1}(\zero)=\{\zero\}$, then $\p$ is isolated in $X$. A \mydef{quadratic test} verifies this equality and is made precise in \Cref{alg:quadratic-isolation}. The converse is not true. If $q(\u) = \zero$, then higher-order terms may still prevent such a curve from existing.

\begin{theorem}[Quadratic isolation test]\label{thm:quadratic-certificate}
Let $F:\CC^n\to\CC^m$ be a polynomial map, let
$\p\in\mathcal V(F)$, set
$J=\operatorname{Jac}(F)(\p)$, and let $F_2$ be the
quadratic part of $F(\p+\x)$. 
Suppose that the equations
\begin{equation}\label{eq:quadratic-condition}
  J\u=\zero,
  \qquad
  J\v+F_2(\u)=\zero
\end{equation}
have no solution $(\u,\v)\in\CC^n\times\CC^n$ with $\u\neq\zero$.  Then
$\p$ is isolated in $\mathcal V(F)$.
\end{theorem}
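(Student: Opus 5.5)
We argue by contraposition: assuming $\p$ is not isolated in $\mathcal V(F)$, we produce a solution $(\u,\v)$ of \eqref{eq:quadratic-condition} with $\u\neq\zero$. Translating, we may assume $\p=\zero$, so that $F(\x)=J\x+F_2(\x)+F_3(\x)+\cdots$.

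\textbf{Step 1: produce a curve.} If $\p$ is not isolated, it lies on an irreducible component of $X=\mathcal V(F)$ of positive dimension. By the curve selection lemma (or by slicing that component with generic hyperplanes through $\p$ until it becomes one-dimensional), $\p$ lies on an irreducible algebraic curve $C\subseteq X$. Take a branch of $C$ at $\p$, for instance via the normalization of $C$ or a Puiseux expansion. It gives a nonconstant convergent power series $\gamma(t)=\sum_{j\ge 1}t^j\w_j$ with $\gamma(0)=\zero$, $\gamma(t)\not\equiv\zero$, and $F(\gamma(t))\equiv\zero$. Let $r\ge 1$ be the smallest index with $\w_r\neq\zero$, and set $\u=\w_r$, $\v_j=\w_j$ for $j>r$. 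This is exactly the parametrization \eqref{eq:curve}. Obtaining this parametrization rigorously is the only nonformal input, and I expect it to be the main obstacle. I would cite the existence of a Puiseux or analytic parametrization of a curve branch, which reduces to a formal power series parametrization in $t$ after reparametrizing $t\mapsto t^e$.

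\textbf{Step 2: compare coefficients.} Substitute $\gamma$ into $F$. The linear part contributes $J\gamma(t)=\sum_{j\ge r}t^jJ\w_j$. A homogeneous part $F_k$ with $k\ge 2$ has lowest-order term $t^{kr}F_k(\u)$. The coefficient of $t^r$ is therefore $J\u$, which forces $J\u=\zero$.

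Now look at the coefficient of $t^{2r}$. It receives $J\w_{2r}$ from the linear part and $F_2(\u)$ from the quadratic part. For $k\ge 3$, all terms of $F_k(\gamma)$ have order at least $kr>2r$. For the quadratic part, terms involving $\w_j$ with $j>r$ have order greater than $2r$. Hence the coefficient is $J\w_{2r}+F_2(\u)$, which must vanish. Taking $\v=\w_{2r}$ gives a solution of \eqref{eq:quadratic-condition} with $\u\neq\zero$, contradicting the hypothesis. Therefore $\p$ is isolated.

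The power series bookkeeping in Step 2 is routine. The care needed is only to check that cross terms in $F_2(\gamma)$ and all of $F_k$ for $k\ge3$ have $t$-order strictly greater than $2r$, which follows since each factor has order at least $r$ and at least one factor exceeds $r$ in the cross terms.
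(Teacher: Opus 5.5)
Your proposal is correct and follows the paper's proof: you assume $\p$ is not isolated, take a curve through $\p$ and a local parametrization $\gamma(t)=\p+t^r\u+\sum_{j>r}t^j\v_j$ with $\u\neq\zero$ via normalization, and read off $J\u=\zero$ from the $t^r$ coefficient and $J\v_{2r}+F_2(\u)=\zero$ from the $t^{2r}$ coefficient. Your order count for the cross terms of $F_2(\gamma)$ and for $F_k(\gamma)$ with $k\geq 3$ supplies the bookkeeping that the paper leaves implicit.
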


\begin{proof}
Suppose instead that $\p$ is not isolated.  Then an algebraic curve on
$\mathcal V(F)$ passes through $\p$. By passing to the normalization of the curve, we obtain a nonconstant local parametrization
\[
  \gamma(t)=\p+t^{r}\u+\sum_{j>r}t^{j}\v_j,
  \qquad \u\neq\zero.
\]
The coefficient of $t^r$ in $F(\gamma(t))$ is $J\u$, so $J\u=\zero$.
The coefficient of $t^{2r}$ is
\[
  J\v_{2r}+F_2(\u),
\]
where we take $\v_{2r}=\zero$ if that term is absent.  Since
$F(\gamma(t))\equiv\zero$, the pair $(\u,\v_{2r})$ is a solution of
\eqref{eq:quadratic-condition} with $\u\neq\zero$, a contradiction.
\end{proof}
The equations in \eqref{eq:quadratic-condition} are easily written in
coordinates on the kernel of the Jacobian. Let the columns of
$B\in\operatorname{Mat}_{n\times b}(\CC)$ form a basis of $\ker(J)$, and let
the columns of $L\in\operatorname{Mat}_{m\times c}(\CC)$ form a basis of
$\ker(J^T)$. Let $\w=(w_1,\ldots,w_b)$ be coordinates on $\ker(J)$, so that
$\u=B\w$. Then
\begin{equation}
\label{eq:derivingq}
  F_2(B\w)\in\operatorname{im}(J)
  \quad\Longleftrightarrow\quad
  q(\w)=L^TF_2(B\w)=\zero.
\end{equation}
Since $F_2: \mathbb{C}^n \to \mathbb{C}^m$ is a homogeneous quadratic map, the equation 
\begin{equation}
\label{eq:obstruction}
q(\w)=(q_1(\w),\ldots,q_c(\w))=\zero
\end{equation}
involves $c$ homogeneous quadratics in $b$ variables. We call \eqref{eq:obstruction} the \mydef{quadratic obstruction equations}.
The condition in \cref{thm:quadratic-certificate} is simply
\begin{equation*}
  \mathcal V(q_1,\ldots,q_c)=\{\zero\}.
\end{equation*}
Using slices, \cref{thm:quadratic-certificate} immediately gives the \mydef{quadratic obstruction 
local dimension test}.

\begin{corollary}[Quadratic obstruction local dimension bounds]\label{cor:quadratic-dimension}
Fix $F \subseteq \mathbb{C}[\x]$, $\p = \textbf{0}\in \mathcal V(F)$, and take $\ell_1,\ldots,\ell_k$ to be any homogeneous linear forms. The quadratic obstruction equations of \[F^{(k)} = F \cup \{\ell_i\}_{i=1}^k\]
consist of $c=|F|+k-\textrm{rank}(\textrm{Jac}(F^{(k)})(\p))$ equations $q_1,\ldots,q_c$ in $b = \dim(\ker(\textrm{Jac}(F^{(k)})(\p)))$ variables $w_1,\ldots,w_b$. If
$\mathcal V(q_1,\ldots,q_c) = \{\zero\}$, then
\[
  \dim_{\p}\mathcal V(F)\leq k.
\]
\end{corollary}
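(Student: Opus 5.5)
The plan is to combine \Cref{thm:quadratic-certificate}, applied to the augmented system $F^{(k)}$, with the slicing inequality \eqref{eq:slice-upper}. Both ingredients are already in place; what remains is to make the counts of equations and variables explicit and to check that the variety of $F^{(k)}$ is the slice of $\mathcal V(F)$.

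First, identify the sliced variety. Since the $\ell_i$ are homogeneous linear forms, each vanishes at $\p=\zero$. Hence
\[
\mathcal V(F^{(k)})=\mathcal V(F)\cap\mathcal V(\ell_1,\ldots,\ell_k)
\]
contains $\p$, and \Cref{thm:quadratic-certificate} applies to the polynomial map $F^{(k)}:\CC^n\to\CC^{|F|+k}$ at $\p$.

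Second, record the shape of the obstruction equations. Let $J^{(k)}=\operatorname{Jac}(F^{(k)})(\p)$. This is $J$ with the coefficient rows of $\ell_1,\ldots,\ell_k$ appended. Choose a basis matrix $B$ for $\ker(J^{(k)})$, which has $b=\dim\ker(J^{(k)})$ columns, and a basis matrix $L$ for $\ker((J^{(k)})^T)$. By rank–nullity applied to the $(|F|+k)\times n$ matrix $(J^{(k)})^T$, the matrix $L$ has
\[
c=|F|+k-\rank(J^{(k)})
\]
columns. The quadratic part of $F^{(k)}$ is $F_2$ with $k$ zero entries appended, because the $\ell_i$ are linear. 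By \eqref{eq:derivingq}, the obstruction equations are therefore $q(\w)=L^TF_2^{(k)}(B\w)$: that is $c$ homogeneous quadratics in $b$ variables, as claimed.

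Third, translate the hypothesis $\mathcal V(q)=\{\zero\}$ into the hypothesis of \Cref{thm:quadratic-certificate}. Suppose $(\u,\v)$ solves \eqref{eq:quadratic-condition} for $F^{(k)}$ with $\u\neq\zero$. Then $\u=B\w$ for some $\w\neq\zero$, since $B$ has full column rank. Moreover $F_2^{(k)}(\u)=-J^{(k)}\v\in\operatorname{im}(J^{(k)})$, and since $\operatorname{im}(J^{(k)})=\ker((J^{(k)})^T)^{\perp}$ under the bilinear pairing, this gives $L^TF^{(k)}_2(\u)=\zero$, i.e.\ $q(\w)=\zero$. This contradicts $\mathcal V(q)=\{\zero\}$.

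This third step is the only point needing care. The equivalence $y\in\operatorname{im}(J)\iff L^Ty=\zero$ must use the transpose, not the conjugate transpose; that is exactly what \eqref{eq:derivingq} asserts, so I will cite it. Then \Cref{thm:quadratic-certificate} shows that $\p$ is isolated in $\mathcal V(F^{(k)})$, so $\dim_{\p}\mathcal V(F^{(k)})=0$. Finally, \eqref{eq:slice-upper}, which requires no genericity of the $\ell_i$, gives $\dim_{\p}\mathcal V(F)\le 0+k=k$.
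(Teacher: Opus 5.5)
Your proposal is correct and follows the paper's proof: apply \Cref{thm:quadratic-certificate} to $F^{(k)}$, use \eqref{eq:derivingq} (really just the inclusion $\operatorname{im}(J^{(k)})\subseteq\ker(L^T)$) to turn $\mathcal V(q_1,\ldots,q_c)=\{\zero\}$ into the theorem's no-solution hypothesis, and conclude with \eqref{eq:slice-upper}; you additionally spell out the counts of $b$ and $c$, which the paper leaves implicit. One harmless slip: $(J^{(k)})^T$ is an $n\times(|F|+k)$ matrix, not $(|F|+k)\times n$, though your count $c=|F|+k-\rank(J^{(k)})$ for the dimension of its kernel is right.
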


\begin{proof}
\Cref{thm:quadratic-certificate} says that $\p$ is isolated in the slice by \eqref{eq:derivingq}. Applying \eqref{eq:slice-upper} proves the result.
\end{proof}

\begin{remark}
After constructing the quadratic obstruction equations
$q_1,\ldots,q_c$, deciding whether
$\mathcal V(q_1,\ldots,q_c)=\{\zero\}$ is a standard task in computer
algebra.  Since the equations are homogeneous, their zero set is a cone
containing $\zero$.  It therefore suffices to check whether
$\dim\bigl(\langle q_1,\ldots,q_c\rangle\bigr)=0$.
This can be done,
for example, in \texttt{Macaulay2} \cite{Macaulay2}.
\end{remark}

\section{Algorithms and correctness}\label{sec:algorithms}
We now record the quadratic isolation calculation as
\Cref{alg:quadratic-isolation}.  \Cref{alg:localdimension} applies it as a
subroutine to determine certified upper bounds on the local dimension.  Both
are one-sided: failure to find a quadratic certificate may mean only that a
higher-order test is needed.

\begin{algorithm}[H]
\caption{\textsc{QuadraticIsolation}$(F,\p)$}
\label{alg:quadratic-isolation}

\SetKwInOut{Input}{Input}
\SetKwInOut{Output}{Output}

\Input{A polynomial map $F:\CC^n\to\CC^m$\\
       A point $\p\in\mathcal V(F)$}
\Output{\textsc{isolated}  or
        \textsc{inconclusive}}

Replace $F(\x)$ by $F(\p+\x)$, so that $\p$ becomes the origin, and set $J=\operatorname{Jac}(F)(\zero)$\;

Compute $B$ and $L$ whose columns are bases of
$\ker(J)$ and $\ker(J^T)$, respectively, and set $b=\dim\ker(J)$ and $c=\dim\ker(J^T)$\;

\textbf{if} $b=0$ \textbf{then} \textbf{return} \textsc{isolated}\;

Let $F_2$ be the homogeneous quadratic part of $F$\;
Construct the quadratic obstruction equations
$q(\w)=L^TF_2(B\w)$ in $\w=(w_1,\ldots,w_b)$\;

Set
$I_q=\langle q_1,\ldots,q_c\rangle
\subseteq\CC[w_1,\ldots,w_b]$\;

\textbf{if} $\dim(I_q)=0$ \textbf{then} \textbf{return} \textsc{isolated}\;

\Return{\textsc{inconclusive}}\;
\end{algorithm}

\begin{theorem}[Termination and one-sided correctness]
\label{thm:algorithm-correctness}
\Cref{alg:quadratic-isolation} terminates and its output is correct.  
If $b=0$, it returns \textup{\textsc{isolated}}.  If $b>0$,
let $q(\w)=L^TF_2(B\w)$ be the quadratic obstruction equations constructed
by the algorithm.  It returns \textup{\textsc{isolated}} if and only if
\[
  \mathcal V(q_1,\ldots,q_c)=\{\zero\}.
\]
Whenever it returns \textup{\textsc{isolated}}, the point $\p$ is isolated in
$\mathcal V(F)$.  \textup{\textsc{Inconclusive}} makes no assertion about
whether $\p$ is isolated.
\end{theorem}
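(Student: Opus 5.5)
The plan is to go through \Cref{alg:quadratic-isolation} line by line, treating termination, the ``if and only if'' characterization, and soundness of \textsc{isolated} separately.

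\textbf{Termination.} Every step is a finite computation. Translating $F$ and computing $J$ are finite polynomial operations. Bases $B$ and $L$ of $\ker(J)$ and $\ker(J^T)$ come from Gaussian elimination. Extracting $F_2$ and forming $q(\w)=L^TF_2(B\w)$ is finite polynomial arithmetic. Finally, $\dim(I_q)$ is computed by a Gröbner basis computation, which terminates by Buchberger's algorithm. So the algorithm always terminates.

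\textbf{The characterization.} If $b=0$, the algorithm returns \textsc{isolated} at line 3. In that case $\ker(J)=\{\zero\}$, so $\dim_{\p}\mathcal V(F)\le n-\rank(J)=0$ by the Zariski tangent space bound. Equivalently, \eqref{eq:quadratic-condition} has no solution with $\u\neq\zero$, and \Cref{thm:quadratic-certificate} applies directly.

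If $b>0$, the algorithm returns \textsc{isolated} exactly when $\dim(I_q)=0$, where dimension means Krull dimension of $\CC[\w]/I_q$. Since each $q_i$ is homogeneous, $\mathcal V(I_q)$ is a cone containing $\zero$. I would argue the equivalence by cases:
\begin{itemize}
\item If the cone contains some $\w\neq\zero$, it contains the line $\CC\w$, so it has dimension at least $1$.
\item If $\mathcal V(I_q)=\{\zero\}$, the affine variety is a point, so it has dimension $0$.
\end{itemize}
By the Nullstellensatz, the Krull dimension of $\CC[\w]/I_q$ equals the dimension of $\mathcal V(I_q)$. Hence $\dim(I_q)=0$ if and only if $\mathcal V(q_1,\ldots,q_c)=\{\zero\}$. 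One small point to check: the $q_i$ have rational coefficients if the input does, and dimension is unchanged by extending the field to $\CC$.

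\textbf{Soundness of \textsc{isolated}.} Suppose $\mathcal V(q)=\{\zero\}$. I would show that \eqref{eq:quadratic-condition} then has no solution with $\u\ne\zero$, and conclude by \Cref{thm:quadratic-certificate}. This is the step needing the most care, namely the equivalence \eqref{eq:derivingq}. Suppose $(\u,\v)$ solves \eqref{eq:quadratic-condition}. Then $\u\in\ker J$, so $\u=B\w$ for a unique $\w$, and $\w\neq\zero$ because $B$ has independent columns. The second equation says $F_2(\u)\in\operatorname{im}(J)=\ker(J^T)^{\perp}$, where $\perp$ is taken for the bilinear pairing $\langle x,y\rangle=x^Ty$ without conjugation. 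Since the columns of $L$ span $\ker(J^T)$, this gives $L^TF_2(B\w)=\zero$. That is, $\w$ would be a nonzero point of $\mathcal V(q)$, a contradiction.

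The identity $\operatorname{im}(J)=\ker(J^T)^{\perp}$ holds over $\CC$ with this pairing by the dimension count $\rank J = m-\dim\ker J^T$. Only the direction ``$F_2(\u)\in\operatorname{im}(J)$ implies $L^TF_2(B\w)=\zero$'' is needed here, and that direction is immediate: $L^TJ=0$.

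\textbf{Inconclusive.} The algorithm asserts nothing in this case. That claim is vacuous, and I would remark that \Cref{thm:quadratic-certificate} has no converse, since higher-order terms may still obstruct curves.
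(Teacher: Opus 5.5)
Your proposal is correct and follows the paper's route. In the case $b=0$, $J\u=\zero$ forces $\u=\zero$. The cone structure of $\mathcal V(I_q)$ then gives $\dim(I_q)=0$ if and only if $\mathcal V(q)=\{\zero\}$, and \Cref{thm:quadratic-certificate} gives soundness. You make explicit what the paper leaves implicit: termination of the linear-algebra steps, the Nullstellensatz and field-extension justification, and the direction of \eqref{eq:derivingq} that follows from $L^TJ=0$. The paper also gives a concrete witness, $F(x)=x^3$, an isolated point on which the test is inconclusive.
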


\begin{proof}
If $b=0$, then $J$ has full rank, so  $\p$ is isolated.  Suppose now that $b>0$.
Computing the dimension of an ideal terminates via a Gr\"obner basis calculation.
Since the quadratic obstruction equations are homogeneous, their common zero
set is a cone containing the origin.  Hence $\dim(I_q)=0$ exactly when they
have no nonzero common solution.  \Cref{thm:quadratic-certificate} then
proves the isolation claim. The system $F(x)=x^3$ is an example that shows why the remaining
output must be inconclusive.
\end{proof}

The induced one-sided local dimension test uses \Cref{alg:quadratic-isolation} as a subroutine.

\begin{algorithm}[H]
\caption{\textsc{QuadraticLocalDimensionBounds}$(F,\p)$}
\label{alg:localdimension}

\SetKwInOut{Input}{Input}
\SetKwInOut{Output}{Output}

\Input{A polynomial map $F:\CC^n\to\CC^m$\\
       A point $\p\in\mathcal V(F)$}
\Output{An interval containing $\dim_{\p}\mathcal V(F)$}

Set $\lambda=\max\{n-m,0\}$\;

Choose linearly independent affine-linear polynomials
       $\ell_1,\ldots,\ell_n$ vanishing at $\p$\;

\For{$k=\lambda,\lambda+1,\ldots,n$}{
  Set $F^{(k)}=(F,\ell_1,\ldots,\ell_k)$\;
  Run \textsc{QuadraticIsolation}$(F^{(k)},\p)$\;
  \If{the output is \textsc{isolated}}{
    \Return{$[\lambda,k]$}\;
  }
}
\end{algorithm}

\begin{proof}[Proof of Theorem~\ref{thm:localdimensionbounds}]
Every irreducible component of $\mathcal V(F)$ through $\p$ has codimension
at most $m$, so the local dimension is at least
$\lambda=\max\{n-m,0\}$.  At $k=n$, the added affine-linear equations have
Jacobian rank $n$, so \cref{alg:quadratic-isolation} returns
\textsc{isolated}; hence the algorithm terminates.  If it stops at some smaller $k$,
then the variety sliced with $\mathcal V(\ell_1,\ldots,\ell_k)$ is isolated at $\p$ and equation
\eqref{eq:slice-upper} gives $\dim_{\p}\mathcal V(F)\leq k$.
\end{proof}

\begin{remark}[One-sidedness]
For a general choice of $\ell_1,\ldots,\ell_n$, the first geometrically
isolated slice occurs at $k=\dim_{\p}\mathcal V(F)$ by
\eqref{eq:slicing-characterization}.  Its quadratic obstruction equations may
nevertheless have nonzero solutions.  For example, $F(x)=x^3$ has an isolated
zero at the origin but no quadratic terms.  Therefore the output $k$ may be
strictly larger than the local dimension. One may complete this test to a true local dimension test by (1) assuming genericity of the slices and (2) either implementing higher-order versions of \Cref{alg:quadratic-isolation} or using a classical local dimension test as a fallback when \Cref{alg:localdimension} does not identify the local dimension. 
\end{remark}

\begin{remark}[Regarding exactness]
\Cref{thm:localdimensionbounds} assumes that the input polynomial system and point are both given exactly, i.e., over the rationals. This hypothesis ensures that  \Cref{alg:quadratic-isolation} and \Cref{alg:localdimension}  can be implemented correctly. Over $\mathbb{R}$ and $\mathbb{C}$, the algorithms are still correct as abstract algorithms, however, the numerical subroutines for finding a basis for the kernel of $J$, computing the obstruction equations, or establishing that $\dim(I_q) = 0$ may not be certifiable. 
\end{remark}

\section{Comparison with the classical local dimension test}
\label{sec:examples}

The numerical local dimension test of Bates, Hauenstein, Peterson, and
Sommese \cite{BatesHauensteinPetersonSommese2009} uses the multiplicity method of \cite{DaytonZeng2005}.  We first describe this method, then compare the classical test with the quadratic test on several
examples.

\subsection{The classical multiplicity matrix}
For $d\geq 1$, consider the matrix $M_d$ whose columns are indexed by normalized
differential functionals
$
  \frac{1}{\alpha!}\partial^\alpha$ for $|\alpha|\leq d,
$
and whose rows are indexed by 
$
  \x^\beta f_i,
 $ for $i=1,\ldots,m$ and $|\beta|<d.
$  Following  \cite{BatesHauensteinPetersonSommese2009},
we call $M_d$ the $d$-th order \mydef{multiplicity matrix}. The $((i,\beta),\alpha)$-th entry is
\[(M_d)_{(i,\beta),\alpha} =
  \frac{1}{\alpha!}\partial^\alpha
  \bigl(\x^\beta f_i\bigr)
  (\zero).
\]
\begin{example}
Take $F=(f_1,f_2)=(x^2,y^3)$ and $\p$ the origin.  The monomials indexing columns
below denote the corresponding normalized differentials.  The third-order multiplicity matrix is
\[M_3 = {\footnotesize{
\begin{array}{r|rrrrrrrrrr}
 &1&x&y&x^2&xy&y^2&x^3&x^2y&xy^2&y^3\\ \hline
f_1       &0&0&0&1&0&0&0&0&0&0\\
xf_1      &0&0&0&0&0&0&1&0&0&0\\
yf_1      &0&0&0&0&0&0&0&1&0&0\\
x^2f_1    &0&0&0&0&0&0&0&0&0&0\\
xyf_1     &0&0&0&0&0&0&0&0&0&0\\
y^2f_1    &0&0&0&0&0&0&0&0&0&0\\ \hline
f_2       &0&0&0&0&0&0&0&0&0&1\\
xf_2      &0&0&0&0&0&0&0&0&0&0\\
yf_2      &0&0&0&0&0&0&0&0&0&0\\
x^2f_2    &0&0&0&0&0&0&0&0&0&0\\
xyf_2     &0&0&0&0&0&0&0&0&0&0\\
y^2f_2    &0&0&0&0&0&0&0&0&0&0
\end{array}.}}
\]
\end{example}
A vector $\eta=(\eta_\alpha)_{|\alpha|\leq d}$ determines the differential
functional
\[
  \Lambda_\eta
  =
  \sum_{|\alpha|\leq d}
  \eta_\alpha\frac{1}{\alpha!}\partial^\alpha\bigg|_{\zero}.
\]

The vector $M_d\cdot\eta$ records the values of $\Lambda_\eta$ on the row
polynomials: its $(i,\beta)$-th entry is
$\Lambda_\eta(\x^\beta f_i)$.  Hence $M_d\cdot\eta=\zero$ precisely when
$\Lambda_\eta$ annihilates every row of $M_d$.  Since $\Lambda_\eta$ has
order at most $d$, the monomial multiples in $M_d$ ensure that
$\Lambda_\eta$ annihilates the ideal generated by $F$.

The space of functionals satisfying $M_d\cdot\eta=\zero$ 
has dimension
$\mu_d=\dim\ker(M_d)$.
If $\p$ is isolated, the sequence $\mu_1\leq\mu_2\leq\cdots$ eventually
stabilizes at $\mu_k$, the multiplicity of $\p$. We call $k$ the \mydef{stabilization depth} and $k+1$ the \mydef{detection depth}. If $\p$ is not isolated, the
sequence continues to grow. More precisely, \cite[Theorem A.1]{BatesHauensteinPetersonSommese2009}
shows that two consecutive multiplicities are equal if and only
if $\p$ is isolated; in that case their common value is the multiplicity
of $\p$. The classical isolation test computes
successive multiplicity matrices until either two consecutive nullities
agree, proving isolation, or a known upper bound for the multiplicity of an
isolated point is exceeded, proving nonisolation.
\begin{example}
Continue with our example $F=(f_1,f_2)=(x^2,y^3)$ at the origin.  Ordering
the rows and columns by degree gives the nested sequence
\[
  M_1\hookrightarrow M_2\hookrightarrow M_3\hookrightarrow M_4.
\]
Below is $M_4$ with its zero rows suppressed.  The boxes indicate the
 submatrices $M_1,M_2$, and $M_3$.
\[
\begin{tikzpicture}[baseline=(M.center)]
\matrix (M) [
  matrix of math nodes,
  nodes={
    font=\footnotesize,
    minimum width=1.5em,
    minimum height=1em,
    inner sep=0pt
  },
  column sep=0pt,
  row sep=0pt
] {
 &1&x&y&x^2&xy&y^2&x^3&x^2y&xy^2&y^3
   &x^4&x^3y&x^2y^2&xy^3&y^4\\
f_1
 &0&0&0&1&0&0&0&0&0&0&0&0&0&0&0\\
f_2
 &0&0&0&0&0&0&0&0&0&1&0&0&0&0&0\\
xf_1
 &0&0&0&0&0&0&1&0&0&0&0&0&0&0&0\\
yf_1
 &0&0&0&0&0&0&0&1&0&0&0&0&0&0&0\\
xf_2
 &0&0&0&0&0&0&0&0&0&0&0&0&0&1&0\\
yf_2
 &0&0&0&0&0&0&0&0&0&0&0&0&0&0&1\\
x^2f_1
 &0&0&0&0&0&0&0&0&0&0&1&0&0&0&0\\
xyf_1
 &0&0&0&0&0&0&0&0&0&0&0&1&0&0&0\\
y^2f_1
 &0&0&0&0&0&0&0&0&0&0&0&0&1&0&0\\
};

\coordinate (NW) at (M-2-2.north west);
\draw (NW) rectangle (M-3-4.south east);
\draw (NW) rectangle (M-7-7.south east);
\draw (NW) rectangle (M-10-11.south east);
\draw (NW) rectangle (M-10-16.south east);
\end{tikzpicture}
\]
The table below applies the rank-nullity theorem to
$M_1,\ldots,M_4$.
\begin{center}
\begin{tabular}{r|cccc}
\toprule
 & $M_1$ & $M_2$ & $M_3$ & $M_4$\\
\midrule
\# Columns & $3$ & $6$ & $10$ & $15$\\
Rank              & $0$ & $1$ & $4$  & $9$\\
Nullity           & $3$ & $5$ & $6$  & $6$\\
\bottomrule
\end{tabular}
\end{center}
Thus $\mu = (3,5,6,6,\ldots)$ and 
$\textbf{0}$ is isolated with multiplicity $6$. The stabilization depth is $3$ and the detection depth is $4$.
Using \Cref{alg:quadratic-isolation}, we have $J=0$, and the quadratic obstruction equations are
$q=(w_1^2,0)$, vanishing along the $w_2$-axis. Thus, our quadratic test
is inconclusive.
\end{example}

\newpage

\begin{example}
Consider the following example 
from \cite[\S1.2]{BatesHauensteinPetersonSommese2009}:
\[
  f_1=x-y+x^2,\qquad f_2=x-y+y^2.
\]  At the origin,
$\ker(J)=\operatorname{span}\{(1,1)\}$ and
$F_2(w,w)=(w^2,w^2)\in\operatorname{im}(J)$, so the quadratic obstruction
vanishes identically.  The quadratic test is therefore inconclusive.

Below is $M_3$.  The inner boxes indicate $M_1$ and $M_2$.
\[
\begin{tikzpicture}[baseline=(M.center)]
\matrix (M) [
  matrix of math nodes,
  nodes={
    font=\footnotesize,
    minimum width=1.5em,
    minimum height=1em,
    inner sep=0pt
  },
  column sep=0pt,
  row sep=0pt
] {
 &1&x&y&x^2&xy&y^2&x^3&x^2y&xy^2&y^3\\
f_1
 &0&1&-1&1&0&0&0&0&0&0\\
f_2
 &0&1&-1&0&0&1&0&0&0&0\\
xf_1
 &0&0&0&1&-1&0&1&0&0&0\\
yf_1
 &0&0&0&0&1&-1&0&1&0&0\\
xf_2
 &0&0&0&1&-1&0&0&0&1&0\\
yf_2
 &0&0&0&0&1&-1&0&0&0&1\\
x^2f_1
 &0&0&0&0&0&0&1&-1&0&0\\
xyf_1
 &0&0&0&0&0&0&0&1&-1&0\\
y^2f_1
 &0&0&0&0&0&0&0&0&1&-1\\
x^2f_2
 &0&0&0&0&0&0&1&-1&0&0\\
xyf_2
 &0&0&0&0&0&0&0&1&-1&0\\
y^2f_2
 &0&0&0&0&0&0&0&0&1&-1\\
};

\coordinate (NW) at (M-2-2.north west);
\draw (NW) rectangle (M-3-4.south east);
\draw (NW) rectangle (M-7-7.south east);
\draw (NW) rectangle (M-13-11.south east);
\end{tikzpicture}
\]
The nullities are $(\mu_1,\mu_2,\mu_3)=(2,3,3)$, which proves that $\textbf{0}$ is isolated of multiplicity $3$.
\end{example}

Hao, Sommese, and Zeng improve this method by first constructing the
subspace of differential functionals satisfying certain closedness conditions and
then imposing the equations on that smaller space. This avoids constructing
the full multiplicity matrices and substantially reduces their combinatorial
growth. We do not describe their closedness-subspace algorithm here; instead,
we refer the reader to \cite{HaoSommeseZeng2013} for details.

\subsection{Additional examples and comparison with the quadratic obstruction test}
This section presents additional examples, including many from the existing literature, that showcase the difference between the classical local dimension test and \Cref{alg:quadratic-isolation}.

\begin{example}
\label{ex:four-dimensional-kernel}
For $N\geq 4$, let
\[
  F(\x)=
  (x_1,\ldots,x_{N-4},
   x_{N-3}^2,x_{N-2}^2,x_{N-1}^2,x_N^2)
\]
and take $\p=\zero$.  The Jacobian is
\[
  J=
  \begin{pmatrix}
    I_{N-4}&0\\
    0&0
  \end{pmatrix}.
\]
Thus $\ker(J)$ and $\ker(J^T)$ are both four-dimensional, spanned by the last four coordinate vectors.  Constructing $B$ and $L$ accordingly gives
\[
  q(\w)=L^TF_2(B\w)=(w_1^2,w_2^2,w_3^2,w_4^2).
\]
These equations have no common solution other than $\zero$, so $\p$ is isolated. 
\end{example}

\begin{example}[\texttt{CBMS2}]
\label{ex:cbms2}
Let $F=(f_1,f_2,f_3)$, where
\[
\begin{aligned}
 f_1&=(x-y)^3-z^2,\\
 f_2&=(z-x)^3-y^2,\\
 f_3&=(y-z)^3-x^2,
\end{aligned}
\qquad\text{and take }\p=\zero.
\]  This is the \texttt{CBMS2} benchmark of
\cite{HaoSommeseZeng2013}.
The Taylor expansion at $\p$ is
\[
  F(\p+\x)=F_1(\x)+F_2(\x)+F_3(\x) = 
  \underbrace{(0,0,0)}_{F_1(\x)}+
  \underbrace{(-z^2,-y^2,-x^2)}_{F_2(\x)}+
  \underbrace{((x-y)^3,(z-x)^3,(y-z)^3)}_{F_3(\x)}.
\]
Since $J=0$, we may take $B=L=I_3$.  Hence
\[
  L^TF_2(B\w)=(-w_3^2,-w_2^2,-w_1^2).
\]
The only common zero is $\zero$, so the quadratic test proves that $\p$ is
isolated.
\end{example}
\begin{example}[The KSS family]
\label{ex:kss}
For $n\geq2$, consider the original equations
\[
  g_i(\y)=y_i^2+\sum_{j=1}^n y_j-2y_i-(n-1),
  \qquad i=1,\ldots,n,
\]
at $\p^*=(1,\ldots,1)$.  Set $\y=\p^*+\x$ and define
\[
  f_i(\x)=g_i(\p^*+\x)=\sum_{j=1}^n x_j+x_i^2.
\]
We apply the test to $F=(f_1,\ldots,f_n)$ at $\p=\zero$.  Its linear and
quadratic parts are
\[
  F_1(\x)=\left(\sum_{j=1}^n x_j, \ldots, \sum_{j=1}^n x_j\right),
  \qquad
  F_2(\x)=(x_1^2,\ldots,x_n^2).
\]

The Jacobian $J$ is the $n\times n$ all-ones matrix. It has rank one, with
\[
  \ker(J)=\ker(J^T)
  =
  \left\{\x\in\CC^n\middle|\sum_{i=1}^n x_i=0\right\},
  \qquad
  \operatorname{im}(J)=\Span\{\mathbf{1}_n\}.
\]
We may take
\[
  B=L=
  \begin{pmatrix}
    1 & 0 & \cdots & 0 \\
    0 & 1 & \cdots & 0 \\
    \vdots &  & \ddots &  \\
    0 & 0 & \cdots & 1 \\
    -1 & -1&\cdots & -1 
  \end{pmatrix} \in \operatorname{Mat}_{n\times(n-1)}(\CC).
\]
The columns of both matrices form a basis of the corresponding kernel.  For
$\w=(w_1,\ldots,w_{n-1})$, write
  $s=w_1+\cdots+w_{n-1}$. 
Then
\[
  F_2(B\w)=(w_1^2,\ldots,w_{n-1}^2,s^2)
\]
and the quadratic obstruction equations $q=L^TF_2(B\w)$ are
\[
  q_i(\w)=w_i^2-s^2,
  \qquad i=1,\ldots,n-1.
\]
Suppose that $q_1(\w)=\cdots=q_{n-1}(\w)=0$.  If $s=0$, then
$w_i^2=0$ for every $i$, so $\w=\zero$.  If  $s\neq0$, then for each $i$ write
$  w_i=\epsilon_i s$ for $\epsilon_i\in\{1,-1\}$.
Since $s=\sum_{i=1}^{n-1}w_i$, division by $s$ gives
\[
  1=\sum_{i=1}^{n-1}\epsilon_i.
\]
If $n$ is odd, then this sum of $n-1$ signs cannot equal $1$, so necessarily $s=0$, in which case the quadratic test proves isolation.

If $n$ is even, then $n-1$ is odd, and one can choose
$\epsilon_1,\ldots,\epsilon_{n-1}$ whose sum is $1$.  For any $s\neq0$,
\[
  \w=(\epsilon_1s,\ldots,\epsilon_{n-1}s)
\]
is a nonzero solution of the quadratic obstruction equations.  Hence
the quadratic test is inconclusive. The cases $n=5,6$, and $7$ are among the KSS benchmarks considered in
\cite{HaoSommeseZeng2013}.  Our parity calculation shows that the quadratic
test certifies isolation for $n=5$ and $n=7$, but is inconclusive for $n=6$.
\end{example}

The next two examples revisit benchmark systems from
\cite{BatesHauensteinPetersonSommese2009}.

\begin{example}[Rhodonea intersections]
\label{ex:rhodonea}
For odd $k$, define
\[
  h_k(x,y)
  =
  (x^2+y^2)^{(k+1)/2}
  -
  \frac{(x+\mathrm{i}y)^k-(x-\mathrm{i}y)^k}{2\mathrm{i}}.
\]
Over $\mathbb R$, the equation $h_k=0$ defines the \mydef{rhodonea curve}, which has polar coordinate parametrization
$r=\sin(k\theta)$.  Let $R$ be a generic rotation about the origin and, for
odd $m,n\geq 3$, set
\[
  f_1=h_m,\qquad f_2=h_n\circ R^{-1},\qquad F=(f_1,f_2),
\]
with $\p=\zero$.  This defines the two randomly rotated rhodonea curves used
in \cite{BatesHauensteinPetersonSommese2009}.  The lowest-degree terms of
$f_1$ and $f_2$ have degrees $m,n \geq 3$, respectively. Hence
$F_1$ and $F_2$ both vanish identically. Thus, the quadratic test is inconclusive.  The classical test detects the
higher-order terms and computes the multiplicity $mn$.
\end{example}

\begin{example}[Two circles with a common linear factor]
\label{ex:circle-factor}
Let $g=x^2+y^2-y$
and let $\widehat g=g\circ R^{-1}$ for a generic rotation $R$ about the
origin.  Define
\[
  f_1=xg,\qquad f_2=x\widehat g,\qquad F=(f_1,f_2),
\]
and take $\p=\zero$.  If $-\ell$ is the linear part of $\widehat g$, then $F_2=(-xy,-x\ell)$.
The Jacobian at $\p$ is zero, so we take $B=L=I_2$.  Up to signs, the quadratic
obstructions are
\[
  q(\w)=
  \bigl(w_1w_2,\;w_1\ell(w_1,w_2)\bigr).
\]
They vanish along the line $w_1=0$, and so the quadratic test is inconclusive.
\end{example}

\subsection{Comparisons} 
We conclude with two tables comparing the quadratic test with classical isolation tests. \Cref{tab:conclusiveness} records the behavior of the quadratic test on the examples above. As expected, the test is inconclusive on the nonisolated example; it is also inconclusive on some isolated examples because their quadratic obstruction equations have nonzero solutions.

For each isolated example, \Cref{tab:comparison} reports the detection depth and the dimensions of the matrices at that depth involved in each algorithm.  For the equation-by-equation closedness-subspace algorithm of Hao, Sommese, and Zeng \cite{HaoSommeseZeng2013}, the largest matrix whose nullspace is taken is reported. For the classical method of Bates, Hauenstein, Peterson, and Sommese~\cite{BatesHauensteinPetersonSommese2009}, we report the size of the last multiplicity matrix (with zero rows removed).

\begin{table}[H]
\centering
\caption{Behavior of the quadratic test on the examples in this subsection.}
\label{tab:conclusiveness}
\small
\setlength{\tabcolsep}{3pt}
\begin{tabular}{@{}l|lll@{}}
\toprule
System & Example  & Output & Reason\\
\midrule
four-dimensional kernel
  & \cref{ex:four-dimensional-kernel}
  & isolated & $w_1^2=\cdots=w_4^2=0$\\
\texttt{CBMS2} \cite{HaoSommeseZeng2013}
  & \cref{ex:cbms2}
  & isolated &  $-w_1^2=\cdots=-w_3^2=0$\\
KSS, odd $n$ \cite{HaoSommeseZeng2013}
  & \cref{ex:kss}
  & isolated & no balanced sign vector\\ \hline
KSS, even $n$ \cite{HaoSommeseZeng2013}
  & \cref{ex:kss}
  & inconclusive & balanced sign vectors\\
rhodonea curves \cite{BatesHauensteinPetersonSommese2009}
  & \cref{ex:rhodonea}
  & inconclusive & $F_2$ is identically zero\\ \hline
two circles \cite{BatesHauensteinPetersonSommese2009}
  & \cref{ex:circle-factor}
  & inconclusive & not isolated\\
\bottomrule
\end{tabular}
\end{table}

\begin{table}[H]
\centering
\caption{Matrix dimensions  at the detection depth for two classical isolation tests. For the Hao--Sommese--Zeng method, we report the largest nullspace matrix encountered at that depth. For the multiplicity-matrix method, zero rows are removed. Bold indicates \Cref{alg:quadratic-isolation} succeeds.}
\label{tab:comparison}
\small
\setlength{\tabcolsep}{5pt}
\begin{tabular}{@{}l|lll@{}}
\toprule
System & Detection depth & Hao--Sommese--Zeng & Classical\\
\midrule
$\langle x^2,y^3\rangle$
  & $4$ & $4\times 6$ & $9\times 15$\\ \hline
{BHPS example}
  & $3$ & $3\times 6$ & $12\times 10$\\ \hline
\textbf{four-dimensional kernel}, $N=4$
  & $5$ & $7\times 26$ & $140\times 126$\\
\textbf{four-dimensional kernel}, $N=5$
  & $5$ & $20\times 16$ & $350\times 252$\\
\textbf{four-dimensional kernel}, $N=6$
  & $5$ & $21\times 16$ & $756\times 462$\\ \hline
\textbf{\texttt{CBMS2}}
  & $4$ & $6\times 8$ & $30\times 35$\\ \hline
KSS, $n=2$
  & $3$ & $3\times 6$ & $12\times 10$\\
\textbf{KSS}, $n=3$
  & $3$ & $7\times 9$ & $30\times 20$\\
KSS, $n=4$  & $5$ & $65\times 24$ & $280\times 126$\\
\textbf{KSS}, $n=5$  & $5$ & $121\times 35$ & $630\times 252$\\
KSS, $n=6$  & $7$ & $917\times 88$ & $5544\times 1716$\\
\textbf{KSS}, $n=7$  & $7$ & $1688\times 136$ & $12012\times 3432$\\ \hline
rhodonea, $(5,5)$
  & $9$ & $28\times 48$ & $30\times 55$\\
rhodonea, $(7,7)$
  & $13$ & $66\times 96$ & $56\times 105$\\
rhodonea, $(9,9)$
  & $17$ & $120\times 160$ & $90\times 171$\\
\bottomrule
\end{tabular}
\end{table}

For our capstone example in the next section of the $24$-cell,  the multiplicity matrix $M_2$ has size $37,056 \times 18,721$. The reduction method of \cite{HaoSommeseZeng2013} involves much smaller matrices at the cost of performing many more nullspace computations. Ultimately it does not finish in a reasonable amount of time.
\section{The regular 24-cell}\label{sec:24cell}

A realization of a combinatorial $d$-polytope $P$ is a vertex and facet labeled
convex polytope in $\mathbb R^d$ whose face lattice agrees with the face lattice of
$P$.  A realization space consists of all
such coordinate choices in a specified model. By the realization space of $P$, we mean the algebraic closure of this set over $\mathbb{C}$.

Following \cite{RastanawiSinnZiegler2021}, we use the centered
vertex-facet model.  A realization is \mydef{centered} when the origin lies
in its interior.  Its facet inequalities can then be normalized as
$a_j(\x)\leq1$.  Thus a centered realization consists of vertex
vectors $\v_i\in\mathbb R^4$ and facet covectors
$a_j\in(\mathbb R^4)^*$ satisfying
\[
  a_j(\v_i)=1
\]
at every vertex-facet incidence and $a_j(\v_i)<1$ at every nonincidence. We write $\mathcal X_{P}^\circ$ for the set of all centered realizations in $\mathbb{R}^N$, where $N=d(|\textrm{vertices}(P)|+|\textrm{facets}(P)|)$, $\mathcal X_P$ for its algebraic closure in $\mathbb{C}^N$, and $\mathcal X_P(\mathbb{R})$ for $\mathcal X_P \cap \mathbb{R}^N$.

We now restrict our attention to the \mydef{$24$-cell}. It is one of the six regular convex $4$-polytopes.  It has $24$
vertices, $96$ edges, $96$ triangular $2$-faces, and $24$ octahedral facets.
Write $P$ for its labeled combinatorial type. It has $144$
vertex-facet incidences.  The vertex-facet incidence matrix is given as follows:
\[
{\mathcal I = \tiny{\begin{pmatrix}
1&1&1&0&0&0&0&0&0&0&0&0&0&1&0&0&0&0&0&1&1&0&0&0\\
1&0&1&0&1&0&0&1&0&0&0&0&0&0&0&0&0&0&1&1&0&0&0&0\\
0&0&1&0&0&0&0&0&0&0&0&0&1&0&0&1&0&0&1&1&1&0&0&0\\
1&0&0&0&0&0&0&1&0&1&0&0&0&1&0&0&1&0&0&1&0&0&0&0\\
0&0&0&0&0&0&0&0&0&1&0&0&1&1&1&0&0&0&0&1&1&0&0&0\\
1&1&1&1&1&1&0&0&0&0&0&0&0&0&0&0&0&0&0&0&0&0&0&0\\
0&0&0&0&0&0&0&1&0&1&0&0&1&0&0&0&0&0&1&1&0&0&0&1\\
0&1&1&1&0&0&1&0&0&0&0&0&0&0&0&1&0&0&0&0&1&0&0&0\\
0&0&1&1&1&0&0&0&0&0&0&0&0&0&0&1&0&1&1&0&0&0&0&0\\
1&1&0&0&0&1&0&0&0&0&0&0&0&1&0&0&1&0&0&0&0&0&1&0\\
1&0&0&0&1&1&0&1&0&0&0&0&0&0&0&0&1&0&0&0&0&1&0&0\\
0&1&0&0&0&0&1&0&0&0&0&0&0&1&1&0&0&0&0&0&1&0&1&0\\
0&0&0&0&0&0&1&0&1&0&0&0&1&0&1&1&0&0&0&0&1&0&0&0\\
0&0&0&0&1&0&0&1&0&0&0&0&0&0&0&0&0&1&1&0&0&1&0&1\\
0&0&0&0&0&0&0&0&1&0&0&0&1&0&0&1&0&1&1&0&0&0&0&1\\
0&0&0&0&0&0&0&0&0&1&0&1&0&1&1&0&1&0&0&0&0&0&1&0\\
0&0&0&0&0&0&0&1&0&1&0&1&0&0&0&0&1&0&0&0&0&1&0&1\\
0&1&0&1&0&1&1&0&0&0&1&0&0&0&0&0&0&0&0&0&0&0&1&0\\
0&0&0&0&0&0&0&0&1&1&0&1&1&0&1&0&0&0&0&0&0&0&0&1\\
0&0&0&1&1&1&0&0&0&0&1&0&0&0&0&0&0&1&0&0&0&1&0&0\\
0&0&0&1&0&0&1&0&1&0&1&0&0&0&0&1&0&1&0&0&0&0&0&0\\
0&0&0&0&0&1&0&0&0&0&1&1&0&0&0&0&1&0&0&0&0&1&1&0\\
0&0&0&0&0&0&1&0&1&0&1&1&0&0&1&0&0&0&0&0&0&0&1&0\\
0&0&0&0&0&0&0&0&1&0&1&1&0&0&0&0&0&1&0&0&0&1&0&1
\end{pmatrix}
}}
\] 
Let $\mathcal X\subseteq\CC^{4(24+24)} = \CC^{192}$ be the \mydef{incidence variety} defined by the
$144$ incidence equations:
\begin{equation}
\label{eq:24cellequations}
  g_{ij}(\mathbf a,\v)=\mathbf a_j^T\v_i-1,
  \qquad (i,j)\in\{(i,j) \mid \mathcal I_{i,j}=1\}.
\end{equation}  Its expected dimension is $192-144=48$. Since every centered realization satisfies
\eqref{eq:24cellequations}, $
  \mathcal X_P\subseteq\mathcal X
$.

The \mydef{symmetric regular realization} of the $24$-cell is
\[
  P_{\mathrm{reg}}
  =
  \operatorname{conv}
  \{\pm\mathbf e_i\pm\mathbf e_j\mid 1\leq i<j\leq4\}.
\]
Let $\p^*=(\mathbf a_1^*,\ldots,\mathbf a_{24}^*,
            \v_1^*,\ldots,\v_{24}^*)  \in\mathcal X_P$ represent
$P_{\mathrm{reg}}$.    At
$\p^*$, the Jacobian has rank $140$, so the Zariski tangent space has
dimension $52$.  Rastanawi, Sinn, and Ziegler constructed smooth real points
of local dimension $48$ converging to $\p^*$ and proved that
$\p^*$ is singular
\cite{RastanawiSinnZiegler2021}.  Consequently,
\[
  48
  \leq\dim_{\p^*}(\mathcal X_P)
  \leq\dim_{\p^*}(\mathcal X)
  \leq52.
\]

We intersect $\mathcal X$ with a codimension-$48$ coordinate linear space through
$\p^*$.  Namely, we fix the following twelve vertices at their coordinates
in the symmetric regular realization:
\[
\begin{aligned}
 \v_1^*&=(-1,-1,0,0),&
 \v_2^*&=(-1,0,-1,0),&
 \v_3^*&=(-1,0,0,-1),\\
 \v_7^*&=(-1,1,0,0),&
 \v_{18}^*&=(1,-1,0,0),&
 \\[3pt]
 \v_6^*&=(0,-1,-1,0),&
 \v_{10}^*&=(0,-1,0,1),&
 \v_{15}^*&=(0,1,0,-1),\\
 \v_{19}^*&=(0,1,1,0),&
 \v_{22}^*&=(1,0,0,1),&
 \v_{23}^*&=(1,0,1,0),\\
 \v_{24}^*&=(1,1,0,0).&&
\end{aligned}
\]
Thus the slice $\mathcal L$ is defined by the $48$ scalar equations
\[
  \v_i-\v_i^*=\zero,
  \qquad
  i\in\{1,2,3,6,7,10,15,18,19,22,23,24\}.
\]
The first five fixed vertices form an affine frame. We now write
\[
  G:\CC^{192}\longrightarrow\CC^{192}
\]
for the polynomial map consisting of the $144$ incidence equations followed
by these $48$ slice equations. We translate $\p^*$ to the origin by setting
\[
  F(\x)=G(\p^*+\x),
  \qquad \text{ and }  \qquad \p=\zero.
\]
The exact computation in \texttt{24cell.m2} shows that $J$ has rank $188$, and so $b=c=4$. We construct $B$ and $L$ using the following \texttt{Macaulay2} commands:
  \[
  \texttt{B = gens ker J} \quad \quad \text{ and } \quad \quad \texttt{L = gens ker transpose J}.
  \]
Our code computes the following quadratic obstruction equations in the variables $\w=(w_1,\ldots,w_4)$ parametrizing the kernel of $J$:
\begin{align*}
q_1 &=
 -4w_1w_2-4w_1w_3+4w_2w_3+8w_3^2-16w_3w_4,\\
q_2 &=
 8w_2^2+8w_2w_3+8w_3^2-12w_2w_4-12w_3w_4,\\
q_3 &=
 -4w_1w_2-4w_1w_3+4w_2w_3+8w_3^2
 +8w_1w_4-8w_2w_4-8w_3w_4+4w_4^2,\\
q_4 &=
 8w_1^2-8w_1w_3+8w_3^2+12w_1w_4-12w_3w_4.
\end{align*}
The exact computation returns the dimension of this homogeneous ideal:
\[
  \texttt{dim(Iq)}=0.
\]
The quadratic test therefore proves that
$\p^*$ is isolated in $\mathcal X\cap\mathcal L$.
\begin{proof}[Proof of \Cref{thm:24cell}]
The incidence variety $\mathcal X\subseteq\CC^{192}$ is cut out by $144$
equations.  Therefore,
\[
  \dim_{\p^*}(\mathcal X)\geq192-144=48.
\]
On the other hand, $\mathcal L$ is defined by $48$ linearly independent
coordinate equations through $\p^*$.  The exact quadratic computation above
proves that $\p^*$ is isolated in $\mathcal X\cap\mathcal L$.  The slicing
inequality therefore gives $
  \dim_{\p^*}(\mathcal X)\leq 48$ and hence $
  \dim_{\p^*}(\mathcal X)=48$.

Since $\mathcal X_P\subseteq\mathcal X$, it follows that
$
  \dim_{\p^*}(\mathcal X_P)\leq48$. Rastanawi, Sinn, and Ziegler constructed smooth real realizations converging
to $\p^*$ along a $48$-dimensional component of the realization space \cite{RastanawiSinnZiegler2021}.  Therefore,
\[
  \dim_{\p^*}(\mathcal X_P)\geq48,
\]
and consequently $
  \dim_{\p^*}(\mathcal X_P)=48.$
\end{proof}

\newpage

\begingroup
\footnotesize
\bibliographystyle{abbrv}
\bibliography{references}
\bigskip

\noindent
\textsc{Taylor Brysiewicz}\\
Department of Mathematics, Western University\\
London, Ontario N6A 5B7, Canada\\
\textit{Email address:} \texttt{tbrysiew@uwo.ca}

\medskip

\noindent
\textsc{Rainer Sinn}\\
Mathematisches Institut, Georg-August-Universit\"at G\"ottingen\\
Bunsenstra{\ss}e 3--5, 37073 G\"ottingen, Germany\\
\textit{Email address:}
\texttt{rainer.sinn@mathematik.uni-goettingen.de}
\endgroup

\appendix

\newpage

\section{Exact computation for the symmetric regular $24$-cell}
\label{app:24cell-computation}

This appendix contains the exact \texttt{Macaulay2} computation used in the
proof of \Cref{thm:24cell}.  All calculations are performed over $\QQ$.
The file \texttt{24cell-data.m2} contains the vertex-facet incidence matrix,
the ordered vertex and facet coordinates of the symmetric regular
realization, and the indices of the twelve fixed vertices.  The file
\texttt{24cell.m2} constructs the $144$ incidence equations and the $48$
coordinate slice equations, evaluates their Jacobian at the symmetric regular
realization, and constructs the quadratic obstruction ideal.
The two files should be placed in the same directory.  The computation
returns
\[ 
  \dim(I_q)=0.
\]
\subsection{Supplementary Code}

\begin{lstlisting}[
  language=M2,
  basicstyle=\footnotesize\ttfamily,
  breaklines=true,
  caption={The file \texttt{24cell-data.m2}.}
]
inc = matrix(ZZ,{
 {1,1,1,0,0,0,0,0,0,0,0,0,0,1,0,0,0,0,0,1,1,0,0,0}, 
 {1,0,1,0,1,0,0,1,0,0,0,0,0,0,0,0,0,0,1,1,0,0,0,0},
 {0,0,1,0,0,0,0,0,0,0,0,0,1,0,0,1,0,0,1,1,1,0,0,0},
 {1,0,0,0,0,0,0,1,0,1,0,0,0,1,0,0,1,0,0,1,0,0,0,0},
 {0,0,0,0,0,0,0,0,0,1,0,0,1,1,1,0,0,0,0,1,1,0,0,0},
 {1,1,1,1,1,1,0,0,0,0,0,0,0,0,0,0,0,0,0,0,0,0,0,0},
 {0,0,0,0,0,0,0,1,0,1,0,0,1,0,0,0,0,0,1,1,0,0,0,1},
 {0,1,1,1,0,0,1,0,0,0,0,0,0,0,0,1,0,0,0,0,1,0,0,0},
 {0,0,1,1,1,0,0,0,0,0,0,0,0,0,0,1,0,1,1,0,0,0,0,0},
 {1,1,0,0,0,1,0,0,0,0,0,0,0,1,0,0,1,0,0,0,0,0,1,0},
 {1,0,0,0,1,1,0,1,0,0,0,0,0,0,0,0,1,0,0,0,0,1,0,0},
 {0,1,0,0,0,0,1,0,0,0,0,0,0,1,1,0,0,0,0,0,1,0,1,0},
 {0,0,0,0,0,0,1,0,1,0,0,0,1,0,1,1,0,0,0,0,1,0,0,0},
 {0,0,0,0,1,0,0,1,0,0,0,0,0,0,0,0,0,1,1,0,0,1,0,1},
 {0,0,0,0,0,0,0,0,1,0,0,0,1,0,0,1,0,1,1,0,0,0,0,1},
 {0,0,0,0,0,0,0,0,0,1,0,1,0,1,1,0,1,0,0,0,0,0,1,0},
 {0,0,0,0,0,0,0,1,0,1,0,1,0,0,0,0,1,0,0,0,0,1,0,1},
 {0,1,0,1,0,1,1,0,0,0,1,0,0,0,0,0,0,0,0,0,0,0,1,0},
 {0,0,0,0,0,0,0,0,1,1,0,1,1,0,1,0,0,0,0,0,0,0,0,1},
 {0,0,0,1,1,1,0,0,0,0,1,0,0,0,0,0,0,1,0,0,0,1,0,0},
 {0,0,0,1,0,0,1,0,1,0,1,0,0,0,0,1,0,1,0,0,0,0,0,0},
 {0,0,0,0,0,1,0,0,0,0,1,1,0,0,0,0,1,0,0,0,0,1,1,0},
 {0,0,0,0,0,0,1,0,1,0,1,1,0,0,1,0,0,0,0,0,0,0,1,0},
 {0,0,0,0,0,0,0,0,1,0,1,1,0,0,0,0,0,1,0,0,0,1,0,1}
 })
h = 1/2
regularVertices={{-1,-1,0,0},{-1,0,-1,0},{-1,0,0,-1},{-1,0,0,1},{-1,0,1,0},
{0,-1,-1,0},{-1,1,0,0},{0,-1,0,-1},{0,0,-1,-1},{0,-1,0,1},{0,0,-1,1},
{0,-1,1,0},{0,0,1,-1},{0,1,-1,0},{0,1,0,-1},{0,0,1,1},{0,1,0,1},
{1,-1,0,0},{0,1,1,0},{1,0,-1,0},{1,0,0,-1},{1,0,0,1},{1,0,1,0},
{1,1,0,0}}
regularFacets={{-h,-h,-h,h},{0,-1,0,0},{-h,-h,-h,-h},{h,-h,-h,-h},{0,0,-1,0},
{h,-h,-h,h},{h,-h,h,-h},{-h,h,-h,h},{h,h,h,-h},{-h,h,h,h},{1,0,0,0},
{h,h,h,h},{-h,h,h,-h},{-h,-h,h,h},{0,0,1,0},{0,0,0,-1},{0,0,0,1},
{h,h,-h,-h},{-h,h,-h,-h},{-1,0,0,0},{-h,-h,h,-h},{h,h,-h,h},{h,-h,h,h},
{0,1,0,0}}
frameVertices = {1,2,3,7,18}
additionalFixedVertices = {6,10,15,19,22,23,24}
fixedVertices = join(frameVertices,additionalFixedVertices)
\end{lstlisting}

\begin{lstlisting}[
  language=M2,
  basicstyle=\footnotesize\ttfamily,
  breaklines=true,
  caption={The file \texttt{24cell.m2}.}
]

--Load 24-cell incidence and realization of interest
load "24cell-data.m2"
--create realization space ring
R = QQ[a_(1,1)..a_(24,4),v_(1,1)..v_(24,4)]
ringVariables = flatten entries vars R
--helpers
aIndex = (j,k) -> 4*(j-1)+(k-1); 
vIndex = (i,k) -> 96+4*(i-1)+(k-1);
aVar = (j,k) -> ringVariables#(aIndex(j,k)); 
vVar = (i,k) -> ringVariables#(vIndex(i,k));
--standard regular realization 
pstar = join(flatten regularFacets, flatten regularVertices);
--vertex/facet incidence equation constructor for realization space
incidenceEquation = ij -> (
    i := ij#0;
    j := ij#1;
    sum(toList(1..4), k -> aVar(j,k)*vVar(i,k)) - 1
);
incidencePairs = {}
for i from 1 to 24 do (for j from 1 to 24 do (
        if inc_(i-1,j-1) == 1 then
            incidencePairs = append(incidencePairs,{i,j});
););
--equations
incidenceEquations = apply(incidencePairs,incidenceEquation)
--additional equations coming from a coordinate linear space
sliceEquations = flatten apply(fixedVertices, i ->
    apply(toList(1..4), k ->
        vVar(i,k)-(regularVertices#(i-1))#(k-1)
))
--construct Jacobian and evaluate at pstar
F = join(incidenceEquations,sliceEquations)
J = transpose jacobian matrix{F}
atP = map(QQ,R,pstar)
A = atP J
--Construct B and L
B = gens ker A; L = gens ker transpose A
--create quadratic obstruction equations
W = QQ[w_1..w_(numColumns(B))]
wVariables = flatten entries vars W
wColumn = transpose matrix{wVariables}
Bw = sub(B,W)*wColumn
deltaA = (j,k) -> Bw_(aIndex(j,k),0); deltaV = (i,k) -> Bw_(vIndex(i,k),0);
quadraticIncidenceValues = apply(incidencePairs, ij -> (
        i := ij#0; j := ij#1;
        sum(toList(1..4), k ->
            deltaA(j,k)*deltaV(i,k)
)))
quadraticValues = join(quadraticIncidenceValues,48:0_W)
F2Bw = transpose matrix{quadraticValues}
qColumn = transpose(sub(L,W))*F2Bw

--turn equations into an ideal and verify 0-dimensional
Iq = ideal flatten entries qColumn
dim(Iq) --outputs 0
\end{lstlisting}

\end{document}